\newtheorem{theorem}{Theorem}[section]
\newtheorem{proposition}[theorem]{Proposition}
\theoremstyle{definition}
\newtheorem{remark}[theorem]{Remark}
\newcommand{\V}     {\mathbb{V}} 
\newcommand{\C}     {\mathbb{C}} 
\newcommand{\G}     {\mathbb{G}} 
\newcommand{\R}     {\mathbb{R}} 
\newcommand{\Z}     {\mathbb{Z}} 
\newcommand{\N}     {\mathbb{N}} 
\renewcommand{\P}   {\mathbb{P}} 
\newcommand{\E}     {\mathbb{E}}
\newcommand{\Dcal}   {{\mathcal D }}
\newcommand{\Gcal}   {{\mathcal G }}
\newcommand{\Mcal}   {{\mathcal M }}
\title{Decay of correlations in the  monomer-dimer model}
\author{Alexandra Quitmann\thanks{University of Rome, La Sapienza, Rome, Italy. Email: alexandra.quitmann@uniroma1.it }}
\numberwithin{equation}{section}
\numberwithin{equation}{section}
\begin{document}

\date{
    \today
}

\maketitle

\begin{abstract}
We consider the monomer-dimer model, whose realisations are spanning sub-graphs of a given graph such that
every vertex has degree zero or one.
The measure depends on a parameter,  the monomer activity,  
which rewards the total number of monomers.
We consider general correlation functions including monomer-monomer correlations and dimer-dimer covariances. We show that these correlations decay exponentially
fast with the distance if the monomer activity is strictly positive. 
Our result improves a previous upper bound from van den Berg and is of interest due to its relation to transverse spin-spin correlations in classical spin systems.
Our proof is based on the cluster expansion technique.% and  the Lee and Yang theorem. 
\end{abstract}

\section{Introduction}
This note considers the \textit{monomer-dimer model} \cite{HeilmannLieb},
whose realisations are spanning sub-graphs of a given graph such that
every vertex has degree zero or one.  Vertices with degree zero 
are referred to as \textit{monomers} and pairs of vertices connected by an edge
are referred to as \textit{dimers}.  
The measure depends on a parameter,  the monomer activity $\rho \geq 0$,  
which controls the total number of monomers. In case of zero monomer
activity no monomers are present and we obtain the classical \textit{dimer model}.
%The model attracts interests...Hier jetzt Referenzen nennen

By superimposing two realisations of the monomer-dimer model we obtain a configuration of the \textit{double monomer-dimer model}. 
This model can be viewed as a random walk loop soup whose configurations are collections of self-avoiding and mutually self-avoiding paths which might be closed or open, see e.g. Figure \ref{Fig:monomerdimer}. If the monomer activity is zero, all paths are closed and the double monomer-dimer model reduces to the double dimer model.

The (double) monomer-dimer model shares an intriguing similarity with the Spin $O(N)$ model, namely they both have a probabilistic reformulation as a particular random path model \cite{LeesTaggi}. In this representation, the external magnetic field of the Spin $O(N)$ model plays the same role as the monomer activity in the monomer-dimer model and the two models only differ in the weight that is assigned to the number of visits of (open and closed) paths at the vertices. 
%, which is one of the most important statistical mechanics models. Its configurations are vectors taking values in the $N-1$ dimensional unit sphere and the measure depends on a parameter, which is called the external magnetic field. 
%This relation will be made more precise later in this note.
 The qualitative behaviour of the random path model, however, is expected to not depend on the choice of such weight function. 
 %In particular, through this reformulation, questions posed in one of the two models, are closely related to the other one.
%For non-zero monomer activity or external magnetic field 
%In particular, both, monomer-monomer and spin-spin correlations can be expressed in terms of these random paths. 
%In \cite{LeesTaggi} this reformulation was applied to extend the proof of exponential decay of the truncated spin-spin correlations to any integer value of $N$. 

In this note we study the rate of decay of \textit{monomer-monomer correlations} when the monomer activity is non-zero.
Through the random path representation, this question is closely related to an open problem in the Spin $O(N)$ model. Here, it is known that connected spin-spin correlations decay exponentially fast with the distance between the vertices when the external magnetic field is non-zero. The constant of decay in the exponent is known to be $\Omega(h)$ as $h \to 0$ for $N=1,2,3$ \cite{Frohlich} and to be $\Omega(h^2)$ for any $N \in \N$ \cite{LeesTaggi}.
% In dem paper werden nicht die truncated correlations betrachtet!
 It is however conjectured that the constant decays as $\Omega(\sqrt{h})$ when $h \to 0$ for any integer value of $N$. The same behaviour is expected to occur in the monomer-dimer model. 

Our main result shows that for any strictly positive value of the monomer activity $\rho$, monomer-monomer correlations decay exponentially fast with the graph distance between the vertices.
% If the monomer activity is small enough, %, but strictly positive, 
%we further show that rate of decay is at least linear in the monomer activity. 
For large enough values of $\rho$, this result is derived using a cluster expansion. Applying similar analytic tools as in \cite{Frohlich} we can then extend this result to small values of $\rho$ and further show that the constant of decay is of order at least $\Omega(\rho)$ as $\rho \to 0$.
%It does not depend on the geometry of the underlying lattice and thus holds in any dimension. 
%For small values of $\rho$ we then apply similar analytic tools as in \cite{Frohlich} and we show that the constant of decay is at least linear in the monomer activity if the monomer activity is small enough. 
%Via the random path representation our problem is closely related to the rate of decay of spin-spin correlations in presence of an external magnetic field.
%Our arguments are based on \cite{Frohlich}, where a similar behaviour was shown for truncated spin-spin correlations.
%Nonetheless, for both models, it is still an open conjecture that the constant of decay is of order $O(\sqrt{\rho})$ in the limit as $\rho \to 0$. 
%$h$. Based on the Lee and Yang theorem, it has been shown in \cite{Frohlich} that for any $N \in \{1,2,3\}$ these correlations admit exponential decay with a constant that decays at least linear in the magnetic field in the limit as $h \to 0$. 

%Via the random path representation, monomer-monomer correlations in the monomer-dimer model are closely related to truncated spin-spin correlations in the Spin $O(N)$ model. For the latter model, it is conjectured that the constant of decay is of order  $O(\sqrt{h})$ in the limit as $h \to 0$ and the same behaviour is expected to occur for our monomer correlations. 

It should be emphasized that our result only holds for non-zero values of the monomer activity and the behaviour of the dimer model, i.e., the monomer-dimer model at zero monomer activity, is strongly different. In dimension $d=2$ the monomer-monomer correlation admits polynomial decay with the distance between the monomers \cite{Dubedat,Fisher}, while in any dimension $d \geq 3$ it exhibits long-range order \cite{Taggi}. 

Our result also holds for more general correlation functions including the \textit{dimer-dimer covariance} as special case. It is known that this covariance decays exponentially in the distance between the edges with a constant of order $\Omega(\rho^2)$ in the limit as $\rho \to 0$. More precisely, in \cite{vandenBergSteif} it is shown that the dimer-dimer covariance is upper bounded by the probability of observing a path in the  double monomer-dimer model that connects these two edges. The exponential decay of such probability then follows from \cite{vandenBerg} based on an argument with disagreement percolation. We remark that for non-zero monomer activity, the connection probability behaves differently, %In contrast, if the monomer activity is zero, such probability also tends to zero in two dimensions \cite{Kenyon}, however,
namely it stays uniformly positive in any dimension $d \geq 3$ \cite{QuitmannTaggi}. 
In this note, we improve the existing bound due to \cite{vandenBerg,vandenBergSteif} by showing that the constant decays as $\Omega(\rho)$ in the limit $\rho \to 0$.

\begin{figure}
\begin{center}
\includegraphics[scale=0.6]{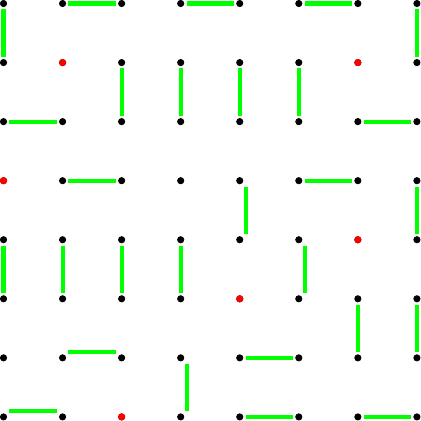}
\hspace{0.7cm}
\includegraphics[scale=0.6]{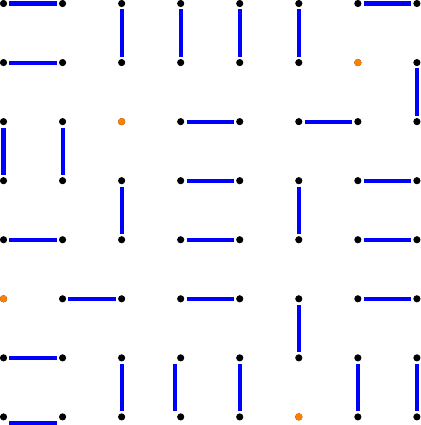}
\hspace{0.7cm}
\includegraphics[scale=0.6]{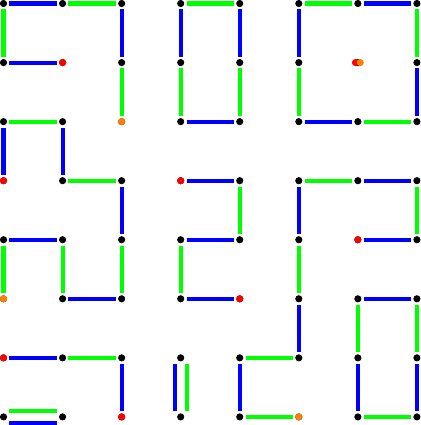}
\caption{The first two figures show monomer-dimer configurations $\omega, \omega^\prime \in \Omega_K$, where $K \subset \Z^2$. The third figure shows their superposition resulting in a collection of open and closed paths.}
\label{Fig:monomerdimer}
\end{center}
\end{figure}

It is further interesting to compare the double monomer-dimer model with the \textit{monomer double-dimer model} \cite{QuitmannTaggi}. 
In the latter model, the configurations are also  superpositions of two  independently sampled monomer-dimer configurations, however, conditioned on having the same set of monomers. 
Consequently, the paths in the double monomer-dimer model might be open, while in the monomer double-dimer model all paths are closed. In the discussion above we have seen that the double monomer-dimer model behaves drastically different if the monomer activity changes from small, but strictly positive values to zero. This, however, is not the case in the monomer double-dimer model, i.e., in the system where all the paths are closed \cite{BetzTaggi, QuitmannTaggiQuattropaniForien, QuitmannTaggi1, QuitmannTaggi}. In particular, exponential decay of the connection probabilities only occurs for large enough values of the monomer activity.

\section{Model and main result}
Consider a finite undirected graph $G = (V,E)$. 
A dimer configuration (or perfect matching) of $G$ is a subset $d \subset E$ of edges such that every vertex $v \in V$ is an element of precisely one edge. 
We let $D_G$ be the set of all dimer configurations in $G$. 
Given a set $A \subset V$, we let $G_A$ be the subgraph of $G$ with vertex set
$V \setminus A$ and with edge-set consisting of all the edges  in $E$ which 
do not touch any vertex in $A$.
We let $D_G(A)$ be the set of dimer configurations in $G_A$. 

In this note, we concentrate on the $d$-dimensional cubic lattice. We denote by $\G=(\V,\E)$ the graph with vertex set $\V=\Z^d$ and with edge set $\E=\{\{x,y\}  \, : x,y \in \Z^d, \, d(x,y)=1\}$, where $d(x,y)$ denotes the graph distance between $x$ and $y$. 
We denote by $G_K=(K,E_K)$ the graph with vertex set $K \subset \Z^d$ and with edge set $E_K := \{\{x,y\} \in \E : \, x,y \in K \} \subset \E$.

%\subsection{The monomer-dimer model.}
Given $K \subset \Z^d$, the configuration space of the monomer-dimer model in $G_K$ is denoted by $\Omega_K$ and it corresponds to the set of tuples $\omega = (M, d)$ such that $M \subset K$ and $d \in D_{G_K}(M) $. 
We refer to the first and second element of the tuple $\omega$ as a set of monomers and a set of dimers, respectively. 
We let $\Mcal:\Omega_K \to K$ and $\Dcal: \Omega_K \to E_K$ be the random variables defined by $\mathcal{M}(\omega) := M$ and $\Dcal(\omega):=d$ for each $\omega = (M, d) \in \Omega_K$.

We define a probability measure on $\Omega_K$,
\begin{equation} \label{eq:defmonomerdimermeasure}
\forall \omega \in \Omega_K \qquad \P_{K,\rho}(\omega) := \frac{\rho^{|\mathcal{M}(\omega)|}}{\Z_{K,\rho}},
\end{equation}
where $\rho \geq 0$ is the parameter of the model (monomer density) and $\Z_{K,\rho}$ is the normalizing constant (partition function). 
%We set
%$$
%E:= \bigcup_{L \in \N} E_L
%$$

We are interested in correlations between sets of monomers. %For any $\rho \geq 0$ and any $A \subset K$ we set
%$$
%C_{K,\rho}(A):= \Z_{K \setminus A, \rho}.
%$$
%In other words, $\textcolor{magenta}{\rho^{|A|}} \, C_{K,\rho}(A)$ corresponds to the weight of all monomer-dimer configurations in $G_K$ with fixed monomers at all vertices in $A$.  
For any $A,B \subset K$ with $A \cap B=\emptyset$, we introduce the correlation function
$$
U_{K,\rho}(A,B):= 
%\P_{K,\rho}(A \cup B \subset \Mcal) - \P_{K,\rho}(A \subset \Mcal) \, \P_{K,\rho}(B \subset \Mcal).
 \frac{\Z_{K \setminus A \cup B,\rho}}{\Z_{K,\rho}}- \frac{\Z_{K \setminus A,\rho}}{\Z_{K,\rho}} \, \frac{\Z_{K \setminus B,\rho}}{\Z_{K,\rho}}.
$$
Note that,
$$
\rho^{|A|+|B|} \; U_{K,\rho}(A,B) =  \P_{K,\rho}(A \cup B \subset \Mcal) - \P_{K,\rho}(A \subset \Mcal) \, \P_{K,\rho}(B \subset \Mcal).
$$
We further set
$$
%C_\rho(A):= \lim_{K \uparrow \Z^d} \frac{\Z_{K \setminus A,\rho}}{\Z_{K,\rho}} \text{ and } 
U_\rho(A,B):= \lim_{K \uparrow \Z^d} U_{K,\rho}(A,B),
%= C_\rho(A \cup B)-C_\rho(A) \, C_\rho(B),
$$
where the limit is in the sense of van Hove. Its existence follows from \cite[Theorem 10]{Gruber} since our monomer-dimer model is a special case of the polymer systems studied in \cite{Gruber}.
If $A,B \in \E$, then the correlation function reduces to the dimer-dimer covariance, namely
$$
U_{K,\rho}(A,B) = \P_{K,\rho}\big(A,B\in \Dcal\big) - \P_{K,\rho}\big(A \in \Dcal\big) \, \P_{K,\rho}\big(B \in \Dcal\big) .
$$

\paragraph{Monomer correlations, paths and $O(N)$ spin systems.} We now briefly explain the relation between monomer-monomer and spin-spin correlations. Consider the Spin $O(N)$ model with $N \in \N$ at inverse temperature $\beta \geq 0$ and external magnetic field $h \geq 0$. 
In \cite[Proposition 2.3]{LeesTaggi} it is shown that the spin-spin correlation at $x,y \in K \subset \V$ is identical to the two-point function $\G_{G_K,N,\beta,h}(x,y)$ that is defined in a particular model of random paths. The measure of this model depends on a function  $\mathcal{U}:\N_0 \to \R_{\geq 0}$ which controls the number of visits of paths at the vertices. If we consider a different choice of the function $\mathcal{U}$, namely if we set $\tilde{\mathcal{U}}(r):=1$ for any $r \in \N_0$, then we have that for $N=2$, $h=\rho$ and any $\beta \geq 0$, 
$$
\frac{\Z_{K \setminus \{x,y\},\rho}}{\Z_{K,\rho}}= \tilde{\G}_{G_K,N,\beta,h}(\{x,y\}),
$$
where $\tilde{\G}_{G_K,N,\beta,h}(x,y)$ is defined as the function $\G_{G_K,N,\beta,h}(x,y)$, but with the choice of $\tilde{\mathcal{U}}$ instead of $\mathcal{U}$.

We now present our main theorem. It states that correlation functions between sets of monomers decay exponentially fast with the distance between their vertices. %Furthermore, the constant in the exponent decays at least linear in the monomer density if $\rho$ is small. 
For any $A, B \subset \V$ we set 
$$
d(A,B):= \min_{\substack{(u,v): \, u \in A, \, v \in B}} d(u,v).
$$
 
\begin{theorem} \label{theorem:maintheorem} For any $d \geq 1$, $\rho>0$ and non-empty sets $A, B \subset \V$ with $A \cap B=\emptyset$, there exist $c=c(d,\rho,|A|,|B|), \, c^\prime=c^\prime(d,\rho,|A|,|B|) \in (0,\infty) $ such that 
\begin{equation} \label{eq:maintheorem}
\big|U_\rho(A,B)\big| \leq  c^\prime \, e^{-c \,  d(A,B) },
\end{equation}
where $c= \tilde{c}(d,|A|,|B|) \, \rho$ if $\rho$ is sufficiently small. 
%and $\tilde{c} \geq \frac{2}{\ln(2(a+1)) \, (a+1)}$ with $a=e \, \sqrt{e \, \textcolor{blue}{\min\{|A|,|B|\}} \, (4d-1)}$. 
%Further, $c^\prime \leq \max\{\rho^{-(|A|+|B|)}, \rho^{-2(|A|+|B|)} \} \, c^{\prime \prime}$ for some $c^{\prime \prime} \in (0,\infty)$ that does not depend on $\rho$.
\end{theorem}

\begin{remark}Concerning the decay of dimer-dimer covariances, our theorem above improves the result of \cite{vandenBerg, vandenBergSteif} which states that $c=\Omega(\rho^2)$ in the limit as $\rho \to 0$. We remark, however, that it is still an open problem to show that $c =\Omega(\sqrt{\rho})$. 
\end{remark}
%The same rate of decay is expected for spin-spin correlations in the Spin $O(N)$ model when the external magnetic field approaches zero.
\begin{remark}
The constant $c^\prime$ and a lower bound on $\tilde{c}$ can be written explicitly and follow directly from the proof of Theorem \ref{theorem:maintheorem}. 
\end{remark}

\section{Proof of Theorem \ref{theorem:maintheorem}}
In this section we present the proof of Theorem \ref{theorem:maintheorem}. 
We will first prove exponential decay of monomer correlations in the regime of sufficiently large $\rho$ using a cluster expansion. Exponential decay for small $\rho$ will then follow by applying an analytic theorem, see Theorem \ref{theorem:Guerra} below. 

\begin{proposition} \label{prop:exponentialdecaycluster}
For any $d \geq 1$, any $K \subset \Z^d$, any non-empty $A,B \subset \V$ with $A \cap B =\emptyset$ and any $\rho \geq \sqrt{e \, \min\{|A|,|B|\} \, (4d-1)} \, e$, it holds that 
\begin{equation} \label{eq: exponentialdecaycluster}
\big| \rho^{|A|+|B|} \; U_{K,\rho}(A,B) \big| \leq  e^{-2 \, d(A,B) \,+e^{-2}}.
\end{equation}
%In particular, for any $e,e^\prime$, 
%\begin{equation}
%\text{Cov}_{L,\rho}(e,e^\prime) \leq c_1 \, e^{-c_2 \, d(e,f)}.
%\end{equation}
\end{proposition}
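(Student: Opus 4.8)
The plan is to realise the monomer--dimer partition function as a hard-core polymer gas of small activity, run a cluster expansion, and extract the decay from the fact that any cluster joining $A$ to $B$ must consist of at least $d(A,B)$ polymers.

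\textbf{Polymer set-up and an exact identity.} First I would encode a configuration $\omega=(M,d)$ by its dimer set, which ranges over all partial matchings of $G_K$, and pull out the factor $\rho^{|K|}$, so that $\Z_{K,\rho}=\rho^{|K|}\,\Xi_K$ with $\Xi_K:=\sum_{m}\rho^{-2|m|}$ the partition function of the polymer system whose polymers are the edges of $G_K$, each with activity $z:=\rho^{-2}$, two edges being compatible iff they are vertex-disjoint. Since $G_{K\setminus S}$ is obtained from $G_K$ by deleting exactly the edges touching $S$, we get $C_{K,\rho}(S)/C_{K,\rho}(\emptyset)=\Z_{K\setminus S,\rho}/\Z_{K,\rho}=\rho^{-|S|}\,\Xi_{K\setminus S}/\Xi_K$. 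Writing $\log\Xi_K=\sum_X\Psi(X)$ for the cluster expansion (convergent, see below) over clusters $X$ of polymers with Ursell weights $\Psi$, and observing that passing to $K\setminus S$ removes precisely the clusters whose support meets the set $\mathcal{E}_S$ of edges of $G_K$ with an endpoint in $S$, I obtain
\[
W(S):=\log\frac{\Xi_K}{\Xi_{K\setminus S}}=\sum_{X:\ \operatorname{supp}(X)\cap\mathcal{E}_S\neq\emptyset}\Psi(X)\ \ge\ 0 ,
\]
the inequality because $\Xi_{K\setminus S}\le\Xi_K$ (positive activities). Inclusion--exclusion gives $W(A\cup B)=W(A)+W(B)-W_{AB}$, where $W_{AB}$ is the same sum restricted to clusters whose support meets both $\mathcal{E}_A$ and $\mathcal{E}_B$, whence the exact identity
\[
U_{K,\rho}(A,B)=\rho^{-|A|-|B|}\,e^{-W(A)-W(B)}\bigl(e^{W_{AB}}-1\bigr).
\]

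\textbf{Convergence and the long-cluster estimate.} The next step is to apply the Koteck\'{y}--Preiss criterion with constant weights $a(\cdot)\equiv1$ and $b(\cdot)\equiv2$. Each edge of $\G$ is incompatible with at most $4d-1$ edges (itself and the $2(2d-1)$ further edges through its endpoints), so the criterion reduces to $(4d-1)\,\rho^{-2}e^{3}\le1$, i.e.\ $\rho^{2}\ge(4d-1)e^{3}$, which holds since the hypothesis gives $\rho^{2}\ge e^{2}\cdot e\,|A|(4d-1)\ge(4d-1)e^{3}$. It then yields $\sum_{X:\,e\in\operatorname{supp}(X)}|\Psi(X)|\,e^{2|X|}\le1$ for every edge $e$. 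The geometric point is that if a cluster $X$ has support meeting both $\mathcal{E}_A$ and $\mathcal{E}_B$, then $\bigcup_{e\in\operatorname{supp}(X)}e$ is a connected subgraph of $\G$ containing a vertex of $A$ and a vertex of $B$, hence has at least $d(A,B)$ edges, so $|X|\ge|\operatorname{supp}(X)|\ge d(A,B)$. Summing over the at most $|\mathcal{E}_A|\le2d|A|$ edges of $\mathcal{E}_A$,
\[
|W_{AB}|\ \le\ e^{-2d(A,B)}\sum_{e\in\mathcal{E}_A}\ \sum_{X:\,e\in\operatorname{supp}(X)}|\Psi(X)|\,e^{2|X|}\ \le\ 2d|A|\,e^{-2d(A,B)} .
\]

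\textbf{Conclusion by a dichotomy.} Finally I would split into two regimes. If $2d|A|\,e^{-2d(A,B)}\le1$ then $|W_{AB}|\le1$, so $|e^{W_{AB}}-1|\le e\,|W_{AB}|$; inserting this and $e^{-W(A)-W(B)}\le1$ into the identity, and using $\rho^{-|A|-|B|}\le\rho^{-2}\le\bigl(e^{3}|A|(4d-1)\bigr)^{-1}$ (note $\rho>1$),
\[
|U_{K,\rho}(A,B)|\ \le\ \rho^{-2}\cdot e\cdot 2d|A|\,e^{-2d(A,B)}\ \le\ \frac{2d}{e^{2}(4d-1)}\,e^{-2d(A,B)}\ \le\ e^{-2d(A,B)+1}.
\]
If instead $e^{2d(A,B)}<2d|A|$, drop the nonnegative subtracted term and use the crude bound
\[
U_{K,\rho}(A,B)\ \le\ \frac{C_{K,\rho}(A\cup B)}{C_{K,\rho}(\emptyset)}=\rho^{-|A|-|B|}\,\frac{\Xi_{K\setminus(A\cup B)}}{\Xi_K}\ \le\ \rho^{-2}\ \le\ \frac{1}{e^{3}|A|(4d-1)}\ \le\ \frac{e}{2d|A|}\ <\ e^{-2d(A,B)+1},
\]
the penultimate step using $2d\le 4d-1\le e^{4}(4d-1)$ and the last using $e^{2d(A,B)}<2d|A|$. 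Either way $U_{K,\rho}(A,B)\le e^{-2d(A,B)+1}$. For the lower bound $0\le U_{K,\rho}(A,B)$ --- equivalently the log-supermodularity $\Z_{K,\rho}\,\Z_{K\setminus(A\cup B),\rho}\ge\Z_{K\setminus A,\rho}\,\Z_{K\setminus B,\rho}$ of the partition function --- I would argue separately, e.g.\ by a switching argument on superpositions of two monomer--dimer configurations or by invoking positive association of the monomer occupation field.

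\textbf{Where the difficulty sits.} Convergence of the expansion is comfortable in this large-$\rho$ regime, so the real work is quantitative: pinning the decay rate to exactly $2$ and the prefactor to $e$ forces the choice $(a,b)=(1,2)$ in Koteck\'{y}--Preiss and, more importantly, the dichotomy in the last step. The subtle point is that for $A,B$ at short distance but $|A|$ large, $W_{AB}$ need not be small, so one cannot simply linearise $e^{W_{AB}}-1$; one retreats instead to $U_{K,\rho}(A,B)\le\rho^{-|A|-|B|}$, which is small enough precisely because the hypothesis forces $\rho^{2}\gtrsim|A|d$. Matching all these constants to the stated threshold $\rho\ge e\sqrt{e\,|A|(4d-1)}$ is where the care goes; the polymer reformulation and the ``connected cluster has $\ge d(A,B)$ edges'' bound are routine.
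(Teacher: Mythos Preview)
Your proposal is correct and follows the same cluster-expansion strategy as the paper: encode dimers as a hard-core polymer gas with edge activity $\rho^{-2}$, expand, derive the exact identity
\[
U_{K,\rho}(A,B)=\frac{C_{K,\rho}(A)}{C_{K,\rho}(\emptyset)}\,\frac{C_{K,\rho}(B)}{C_{K,\rho}(\emptyset)}\,\bigl(e^{W_{AB}}-1\bigr),
\]
and use that any cluster contributing to $W_{AB}$ must contain at least $d(A,B)$ polymers. The polymer reformulation, the inclusion--exclusion, and the geometric observation are exactly those in the paper.

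The one substantive difference is your dichotomy, which the paper avoids. You bound $|W_{AB}|\le 2d|A|\,e^{-2d(A,B)}$ and then split cases according to whether this is $\le 1$, recovering the $|A|$-dependence later through the prefactor $\rho^{-|A|-|B|}\le\rho^{-2}\le(e^{3}|A|(4d-1))^{-1}$. The paper instead absorbs $|A|$ directly into the cluster estimate: writing $\rho^{-2m}=e^{-2m}(\rho/e)^{-2m}$ and applying the convergence bound \eqref{eq:upperbound} at the modified activity $(\rho/e)^{-2}$ (valid since $\rho/e\ge\sqrt{e(4d-1)}$), one obtains
\[
|W_{AB}|\ \le\ e^{-2d(A,B)}\cdot e^{3}\rho^{-2}\cdot 2d|A|\ \le\ e^{-2d(A,B)}\cdot\frac{2d}{4d-1}\ \le\ e^{-2d(A,B)}\ \le\ 1,
\]
so no case split is needed and the conclusion follows from $e^{x}-1\le ex$ and $C_{K,\rho}(A)C_{K,\rho}(B)/C_{K,\rho}(\emptyset)^{2}\le 1$ (which holds since $\rho\ge1$). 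Both routes are valid; the paper's is cleaner and shows exactly where the factor $|A|$ in the threshold is consumed.

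Two minor remarks. First, with $a\equiv1$ the Koteck\'y--Preiss output is $\sum_{X\ni e}|\Psi(X)|\,e^{2|X|}\le e^{a(e)}=e$ rather than $\le 1$; your constants absorb this harmlessly. Second, the paper does not supply a separate argument for the lower bound $U_{K,\rho}(A,B)\ge 0$ either, so your plan to invoke positive association of the monomer occupation field (equivalently, log-supermodularity of $S\mapsto \Z_{K\setminus S,\rho}$) is in line with what is implicitly assumed.
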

\begin{proof}
Fix $d \geq 1$, $K \subset \Z^d$ and two non-empty sets $A,B \subset \V$ with $A \cap B = \emptyset$. Without loss of generality, assume that $|A| \leq |B|.$ Let $\rho \geq \sqrt{e \, |A| \, (4d-1)} \, e$.
To begin, we rewrite the partition function $\Z_{K, \rho}$ using a cluster expansion. First, we note that
\[
\begin{aligned}
\Z_{K, \rho} 
% =    \sum\limits_{(M, d ) \in {\Omega}} \rho^{|M|} 
& = \sum_{n=0}^{|K|/2} \sum_{\substack{(M, d ) \in {\Omega_K} : \\ |d|=n}} \rho^{|K|-2|d|}  \\
& = \rho^{|K|} \bigg(1 + \sum_{n \geq 1} \frac{\rho^{-2n}}{n!} \sum_{(\gamma_1,\dots,\gamma_n) \in E_K^n}  \prod_{1 \leq i < j \leq n } \big(1+ \zeta(\gamma_i,\gamma_j)\big)\bigg),
\end{aligned}
\]
% We have hard-core interactions $\delta(\gamma,\gamma^\prime)= \mathbbm{1}_{\{\gamma \cap \gamma^\prime = \emptyset \}} \in \{0,1\}$. Thus, the term in the brackets is of the form pf (5.4) in FV.
where for $\gamma,\gamma^\prime \in E_K$,  $\zeta(\gamma,\gamma^\prime):= \mathbbm{1}_{\{\gamma \cap \gamma^\prime = \emptyset\}} -1$.
We denote by $\mathcal{G}_n$ the set of all (unoriented) connected graphs with vertex set $\mathcal{V}_n=\{1,\dots,n\}$. We introduce the Ursell functions $\varphi$ on finite ordered sequences $(\gamma_1,\dots,\gamma_m) \in E_K^m$, which are defined by 
\begin{equation*} 
\varphi(\gamma_1,\dots,\gamma_m) := 
\begin{cases} 
1 & \text{ if } m=1, \\
\frac{1}{m!} \sum\limits_{G \in \Gcal_m} \prod\limits_{\{i,j\} \in G} \zeta(\gamma_i,\gamma_j) & \text{ if } m \geq 2,
\end{cases}
\end{equation*}
where the product is over all edges in $G$. 
For any $\gamma^* \in E_K$, using that $\rho \geq \sqrt{e \,  (4d-1)}$, it holds that 
\begin{equation*}
\sum_{\gamma \in E_K} \rho^{-2} e \, |\zeta(\gamma,\gamma^*)| \leq (4d-1) \, \rho^{-2} e \leq 1.
\end{equation*}
% Für jedes L ist $|\Gamma| < \infty$. Ist es ein Problem, wenn $L \to \infty$?
By cluster expansion \cite[Theorem 5.4 %and Theorem 5.8
]{FriedliVelenik} it thus holds that
\begin{equation} \label{eq:clusterexpansionZ}
\Z_{K, \rho}  = \rho^{|K|} \, \exp\bigg( \sum\limits_{m \geq 1} \sum\limits_{(\gamma_1,\dots,\gamma_m) \in E_K^m} \varphi(\gamma_1,\dots,\gamma_m) \, \rho^{-2m} \bigg),
\end{equation}
where combined sum and integrals converge absolutely. Furthermore, for any $\gamma_1 \in E_K$, we have that
\begin{equation} \label{eq:upperbound}
1 + \sum\limits_{n \geq 2} n \sum\limits_{(\gamma_2,\dots,\gamma_n) \in E_K^{n-1}} |\varphi(\gamma_1,\gamma_2, \dots, \gamma_n)| \, \rho^{-2(n-1)} \leq e.
\end{equation}

Using a similar cluster expansion as above, we further obtain that for any $K^\prime \subset K$, 
\begin{equation} \label{eq:clusterexpansionG}
\Z_{K\setminus K^\prime, \rho}
 = \rho^{|K|-| K^\prime|} \,  \exp\bigg( \sum\limits_{m \geq 1} \sum\limits_{(\gamma_1,\dots,\gamma_m) \in E_{K \setminus  K^\prime}^m} \varphi(\gamma_1,\dots,\gamma_m) \,\rho^{-2m} \bigg).
\end{equation}

For $m \in \N$ and $ K^\prime \subset K$, let $C_{K^\prime}^m$ denote the set of ordered sequences $\boldsymbol{\gamma}=(\gamma_1,\dots,\gamma_m) \in E_K^m$ such that there exists $i \in [m]$ and $x \in  K^\prime$ such that $x$ is an endpoint of $\gamma_i$.  

%Fix now two disjoint subsets $A, B \subset K$. 
From \eqref{eq:clusterexpansionZ} and \eqref{eq:clusterexpansionG} we then deduce that, 
\begin{equation} \label{eq:twopoint}
\begin{aligned}
& \P_{K,\rho}(A \cup B \subset \Mcal) \\ 
%& = \rho^{|A|+|B|} \frac{\Z_{K \setminus A \cup B, \rho}}{\Z_{K,\rho}} \\
&  = \exp\bigg(- \sum\limits_{m \geq 1} \sum\limits_{(\gamma_1,\dots,\gamma_m) \in C_{A}^m} \varphi(\gamma_1,\dots,\gamma_m) \, \rho^{-2m} \bigg) \\
& \qquad \times \exp\bigg(- \sum\limits_{m \geq 1} \sum\limits_{(\gamma_1,\dots,\gamma_m) \in C_{B}^m} \varphi(\gamma_1,\dots,\gamma_m) \, \rho^{-2m} \bigg) \\
& \qquad \qquad  \times \exp\bigg(\sum\limits_{m \geq 1} \sum\limits_{(\gamma_1,\dots,\gamma_m) \in C_{A}^m \cap C_{B}^m} \varphi(\gamma_1,\dots,\gamma_m) \, \rho^{-2m}\bigg) \\
& = \P_{K,\rho}(A \subset \Mcal)\, \P_{K,\rho}(B \subset \Mcal)  \, \exp\bigg(\sum\limits_{m \geq 1} \, \rho^{-2m} \, \sum\limits_{(\gamma_1,\dots,\gamma_m) \in C_{A}^m \cap C_{B}^m} \varphi(\gamma_1,\dots,\gamma_m) \bigg).
\end{aligned}
\end{equation}
From \eqref{eq:twopoint} we obtain that
\begin{equation} \label{eq:twopoint2}
\begin{aligned}
| \rho^{|A| +|B|} \; U_{K,\rho}(A,B) | 
& \leq  \P_{K,\rho}(A \subset \Mcal)  \; \P_{K,\rho}(B \subset \Mcal) \; \big| e^{\alpha(A,B,\rho)} -1 \big| \\
& \leq | \alpha(A,B,\rho) | \, \max\{1,e^{\alpha(A,B,\rho)}\},
\end{aligned}
\end{equation}
where
$$
\alpha(A,B,\rho):=\sum\limits_{m \geq 1} \, \rho^{-2m} \, \sum\limits_{(\gamma_1,\dots,\gamma_m) \in C_{A}^m \cap C_{B}^m} \varphi(\gamma_1,\dots,\gamma_m).
$$
The last inequality in \eqref{eq:twopoint2} is the mean value inequality applied to the function $x \mapsto e^x$.
Now observe that for any $(\gamma_1,\dots,\gamma_m) \in C_{A}^m \cap C_{B}^m$, $\varphi(\gamma_1,\dots,\gamma_m) \neq 0$ only if the graph $G$, which is obtained from $(\gamma_1,\dots,\gamma_m)$ by drawing an edge between $i$ and $j$ whenever $\zeta(\gamma_i,\gamma_j) \neq 0$, is connected. Stated differently, $\varphi(\gamma_1,\dots,\gamma_m) \neq 0$ only if there exists at least one path connecting a vertex of the set $A$ to a vertex of the set $B$. In particular, it is necessary that $m \geq d(A,B)$. Thus, 
% Warum in Friedli Velenik noch R^d? In meinen Augen kann man es auch dort weglassen.
\begin{equation}
\begin{aligned} \label{eq:application}
 & \sum\limits_{m \geq 1} \rho^{-2m} \, \sum\limits_{(\gamma_1,\dots,\gamma_m) \in C_{A}^m \cap C_{B}^m} |\varphi(\gamma_1,\dots,\gamma_m)| \\
%& \leq \sum\limits_{m \geq d_L(A,B)} e^{-2m} \sum\limits_{(\gamma_1,\dots,\gamma_m) \in C_{e}^m} |\varphi(\gamma_1,\dots,\gamma_m)|  \prod_{i=1}^m \Big(\frac{\rho}{e}\Big)^{-2} \\
& \leq e^{-2 \, d(A,B)} \, \sum\limits_{m \geq 1}  m \, \sum\limits_{\substack{(\gamma_1,\dots,\gamma_m) \in E_K^m: \\ \gamma_1 \cap A \neq \emptyset}} |\varphi(\gamma_1,\dots,\gamma_m)|  \, \Big(\frac{\rho}{e}\Big)^{-2m} \\
& = e^{-2 \, d(A,B)} \, \Big(\frac{\rho}{e}\Big)^{-2} \, \sum\limits_{\gamma_1 \in C_{A}^1} \bigg(1+\sum\limits_{m \geq 2}  m \, \sum\limits_{(\gamma_2,\dots,\gamma_m) \in E_K^{m-1}} |\varphi(\gamma_1,\dots,\gamma_m)|  \, \Big(\frac{\rho}{e}\Big)^{-2(m-1)} \bigg)\\
& \leq e^{-2 \, d(A,B)} \, e^3 \, \rho^{-2} \, |A| \, 2d
\leq  e^{-2 \, d(A,B)},
\end{aligned}
\end{equation}
where in the last two steps we used \eqref{eq:upperbound} and that $\rho \geq \sqrt{e \, |A| \,(4d-1)} \, e$. 
From \eqref{eq:twopoint2} and \eqref{eq:application}, we thus obtain that

\begin{equation*}
|\rho^{|A|+|B|} \; U_{K,\rho}(A,B) | 
\leq  e^{-2 \, d(A,B) } \, \max\{1, e^{e^{-2 \, d(A,B) }}\} \leq  e^{-2 \, d(A,B) +e^{-2}}.
\end{equation*}
This concludes the proof of the proposition.
\end{proof}

We are now ready to prove Theorem \ref{theorem:maintheorem}. The proof is based on Proposition \ref{prop:exponentialdecaycluster} above and on Theorem \ref{theorem:Guerra} below, which provides an analytic result concerning  superharmonic functions. We refer the reader to \cite[Appendix]{Guerra} for definitions and further background.  
For any $a>0$, we denote by  $E_a \subset \C$  the interior of the ellipse with center $a+1$, foci at $a$ and $a+2$ and semi major axis $a+1$, i.e.,
\begin{equation} \label{eq:definitionellipse}
E_a := \Big\{(x,y) \in \C : \frac{(x-(a+1))^2}{(a+1)^2}+\frac{y^2}{(a+1)^2-1} < 1 \Big\}.
\end{equation}
We set $\Omega_a:= E_a \setminus [a,a+2]$. We further set $o:=(0,0) \in \C$.

\begin{theorem}[{\cite[Theorem A.6]{Guerra}}] \label{theorem:Guerra}
Let $a>0$. Suppose that $G: \overline{\Omega_a} \setminus \{o\} \mapsto \R \cup \{\infty\}$ is superharmonic, $G(z) \geq 0$ for all $z \in \overline{\Omega_a} \setminus \{o\}$ and that $G(z) \geq b$ for $a \leq z \leq a+2$. Then 
\begin{equation}
G(z) \geq \frac{b}{\ln(2a+2)} \, \frac{z}{a+1}
\end{equation}
for $0 < z < a$.
\end{theorem}
We remark that in the statement of \cite[Theorem A.6]{Guerra} it is required that $G$ is superharmonic and non-negative on the whole set $\mathbb{H}^+:=\{z \in \mathbb{C} \, | \, Re(z)>0\}$. However, it follows immediately from the proof of the theorem that it suffices to define $G$ on $\overline{\Omega_a} \setminus \{o\}$.

%\textcolor{blue}{Let $\mathbb{H}^+:=\{z \in \mathbb{C} \, | \, Re(z)>0\}$.}

\begin{proof}[Proof of Theorem \ref{theorem:maintheorem}] 
Let $d \geq 1$ and fix two non-empty 
disjoint subsets $A, B \subset \V$. %\textcolor{blue}{and assume without loss of generality that $|A| \leq |B|$}. 
To begin, we note that by \cite[Theorem 10]{Gruber}  the function
$
U_\rho(A,B)
$ 
is an analytic function of $\rho \in \mathbb{H}^+$. Further for any $\rho \in \mathbb{H}^+$, it satisfies the upper bound
\begin{equation} \label{eq:upperbound1}
\begin{aligned}
\big| U_\rho(A,B) \big | 
)
 \leq 2 \, \bigg(\frac{1}{\text{Re}(\rho)}\bigg)^{|A|+|B|} .
\end{aligned}
\end{equation}
The inequality \eqref{eq:upperbound1} is derived in the proofs of \cite[Theorem 9, Theorem 10]{Gruber} and is based on a recurrence relation for the partition function of the monomer-dimer model. 
Let $\rho \in \overline{\Omega_a} \setminus \{o\}$ with $a:= \sqrt{e \, \min\{|A|,|B|\} \, (4d-1)} \, e$. Then by definition \eqref{eq:definitionellipse},
\begin{equation} \label{eq:ReIm}
\text{Im}(\rho)^2 \leq 2 \, (a+1) \, \text{Re}(\rho)-\text{Re}(\rho)^2
\end{equation}
and from \eqref{eq:upperbound1} and \eqref{eq:ReIm} it follows that
\begin{equation} \label{eq:boundabsolutevalue}
\big| \rho^{2 \,(|A|+|B|)} \; U_\rho(A,B) \big | 
\leq 2 \, \bigg(\frac{|\rho|^2}{\text{Re}(\rho)}\bigg)^{|A|+|B|} \\
\leq 2 \, (2a+2)^{|A|+|B|}.
\end{equation}
We introduce the function $G_{A,B}: \overline{\Omega_a} \setminus \{o\} \to \R$ defined by 
$$
G_{A,B}(\rho):= -\ln\bigg|\frac{1}{2} \, \Big(\frac{\rho^{2}}{2a+2}\Big)^{|A|+|B|} \; U_\rho(A,B)\bigg|.
$$
Since $U_\rho(A,B)$ is analytic, the function $G_{A,B}(\rho)$ is superharmonic \cite[Theorem A.3]{Guerra}. From \eqref{eq:boundabsolutevalue} it follows that $G_{A,B}(\rho) \geq 0$ for any $\rho \in \overline{\Omega_a} \setminus \{o\}$.
Further, by Proposition \ref{prop:exponentialdecaycluster} we have that for $a \leq \rho \leq a+2$,
$$
G_{A,B}(\rho)  
= -\ln\bigg|\frac{1}{2} \, \Big(\frac{\rho}{2a+2}\Big)^{|A|+|B|} \; \rho^{|A|+|B|} \, U_\rho(A,B)\bigg|\geq 2  d(A,B).
$$
Applying Theorem \ref{theorem:Guerra} to the function $ G_{A,B}(\rho) $ concludes the proof of the theorem.

\end{proof}

\section*{Acknowledgments}
The author thanks the German Research Foundation through the international research training group 2544 and through the priority program SPP2265 (project number 444084038) for financial support.


\begin{thebibliography}{99}
\bibitem{BetzTaggi}
V. Betz; L. Taggi. Scaling limit of ballistic self-avoiding walk interacting with spatial random permutations. \textit{Electron. J. Prob.} \textbf{24}, 1-37 (2019).

\bibitem{Dubedat}
J. Dubédat. Dimers and families of Cauchy-Riemann operators I. \textit{J. Amer. Soc.} \textbf{28}, 1063-1167 (2015).

\bibitem{Fisher}
M. E. Fisher; J. Sephenson. Statistical Mechanics of Dimers on a Plane Lattice. II. Dimer Correlations and Monomers. \textit{Phys. Rev.} \textbf{132}, 1411-1431 (1963).

%\bibitem{Kenyon}
%R. Kenyon. Conformal invariance of loops in the double dimer model. \textit{Commun. Math. Phys.} \textbf{326}, 477-497 (2014).

\bibitem{QuitmannTaggiQuattropaniForien}
N. Forien; M. Quattropani; A. Quitmann; L. Taggi. Coexistence, enhancements and short loops in random walk loop soups. \textit{Probab. Math. Phys.} \textbf{5}, 753-784 (2024).

\bibitem{FriedliVelenik}
S. Friedli; Y. Velenik. Statistical mechanics of lattice systems: a concrete mathematical introduction. \textit{Cambridge University Press, Cambridge} (2018).

\bibitem{Frohlich}
J. Fröhlich; P.-F. Rodriguez. Some applications of the Lee-Yang theorem. \textit{J. Math. Phys.} \textbf{53} (2012).

\bibitem{Gruber}
C. Gruber; H. Kunz. General properties of polymer systems. 
\textit{Commun. Math. Phys.} \textbf{22}, 133-161 (1971).

\bibitem{Guerra}
F. Guerra; L. Rosen; B. Simon. Correlation inequalities and the mass gap in  $P(\varphi)_2$. III. Mass gap for a class of strongly coupled theories with nonzero external field. \textit{Commun. Math. Phys.} \textbf{41}, 19-32 (1975).


\bibitem{HeilmannLieb}
O. J. Heilmann; E.  H. Lieb. Theory of monomer-dimer systems. 
\textit{Commun. Math. Phys. }\textbf{25}, 190-232 (1972).


\bibitem{LeesTaggi}
B. Lees; L. Taggi. Exponential decay of transverse correlations for $O(N)$ spin systems and related models. \textit{Probab. Theory Relat. Fields. }\textbf{180}, 1099-1133 (2021).

\bibitem{QuitmannTaggi1}
A. Quitmann; L. Taggi. Macroscopic loops in the Bose gas, Spin $O(N)$ and related models. \textit{Commun. Math. Phys.} \textbf{400}, 2081-2136 (2023).

\bibitem{QuitmannTaggi}
A. Quitmann; L. Taggi. Macroscopic loops in the $3d$ double-dimer model. \textit{Electron. Commun. Probab.} \textbf{28} (2023).

\bibitem{Taggi}
L. Taggi. Uniformly positive correlations in the dimer model and macroscopic interacting self-avoiding walk in $\Z^d$, $d \geq 3$. \textit{Comm. Pure Appl. Math.} \textbf{75}, 1183-1236 (2022).

\bibitem{vandenBerg}
J. van den Berg. On the absence of phase transition in the monomer-dimer model. \textit{Perplexing Problems in Probability}, 185-195, Progr. Probab., 44, \textit{Birkhäuser Boston, Boston} (1999).


\bibitem{vandenBergSteif}
J. van den Berg; J. E. Steif. Percolation and the hard-core lattice gas model. \textit{Stochastic Process. Appl.} \textbf{49}, 179-197 (1994).
%\bibitem{Simon}
%B. Simon: The Statistical Mechanics of Lattice Gases, Vol.1, Princeton University Press (1993).

%\bibitem{Ueltschi}
%Ueltschi D. {Cluster expansion and correlation functions} \textit{Moscow Math. J.} \textbf{4}, 511522 (2004)




\end{thebibliography}
\end{document}